\numberwithin{equation}{section}
\theoremstyle{plain}
\newtheorem{lemma}{Lemma}[section]
\newtheorem{proposition}[lemma]{Proposition}
\newtheorem{theorem}[lemma]{Theorem}
\newtheorem{corollary}[lemma]{Corollary}
\theoremstyle{definition}
\newtheorem*{definition*}{Definition}
\definecolor{brown}{RGB}{150,100,0}
\definecolor{purple}{RGB}{150,0,100}
\definecolor{grey}{RGB}{128,128,128}
\newcommand{\R}{{\mathbb R}}
\newcommand{\Z}{{\mathbb Z}} 
\newcommand{\N}{{\mathbb N}}
\newcommand{\Ff}{{\mathcal F}}
\newcommand{\Xx}{{\mathcal X}}
\newcommand{\eps}{{\varepsilon}}
\def\NABLA#1{{\mathop{\nabla\kern-.5ex\lower1ex\hbox{$#1$}}}}
\def\Nabla#1{\nabla\kern-.5ex{}_#1}
\begin{document}
	\title[Differentiation of measures  on complete  Riemannian  manifolds]{Differentiation of measures  on complete  Riemannian  manifolds}

	\author{J\"urgen Jost}
	\address{Max-Planck-Institute for Mathematics in the Sciences,
	Inselstrasse 22, 04103 Leipzig, Germany}
	\email{jjost@mis.mpg.de}
	
	\author{H\^ong V\^an L\^e }	
	\address{Institute  of Mathematics of the Czech Academey of Sciences,
	Zitna 25, 11567  Praha 1, Czech Republic}
\email{hvle@math.cas.cz}

	\author{Tat Dat  Tran }
	\address{Max-Planck-Institute for Mathematics in the Sciences,
		Inselstrasse 22, 04103 Leipzig, Germany \& 
		Mathematisches Institut, Universit\"at Leipzig, Augustusplatz 10, 04109 Leipzig, Germany
		}
	\email{Tran.Dat@mis.mpg.de \& tran@math.uni-leipzig.de}


\thanks{Research  of HVL was supported  by  GA\v CR-project 18-01953J and	 RVO: 67985840}
\keywords{Besicovitch-Federer covering theorem, differentiation of measure, Radon-Nikodym derivative, complete  Riemannian manifold}
\subjclass[2010]{Primary: 28A15, Secondary: 49Q15, 53C20}

\begin{abstract} In this note  we give a new proof of a version of the  Besicovitch   covering 
	theorem, given  in \cite{EG1992}, \cite{Bogachev2007} and extended in  \cite{Federer1969},  for locally finite Borel  measures  on finite dimensional complete Riemannian manifolds  $(M,g)$. As a  consequence, we   prove a  differentiation   theorem   
	for Borel measures on 
	$(M,g)$, which gives a formula for  the Radon-Nikodym density of two nonnegative  locally  finite Borel measures $\nu_1, \nu_{2}$ on $(M, g)$   such that $\nu_1 \ll  \nu_2$,  extending the  known case   when $(M, g)$ is a  standard  Euclidean  space.
	 
\end{abstract}

\maketitle

\section{Introduction}\label{sec:intr}

The existence  of the Radon-Nikodym derivative is  one of the   most frequently  employed  results
in  probability theory and mathematical statistics.  In  the general case, where  $\nu$, $\mu$ are locally finite
measures   on  a  general measurable space $\Xx$ and $\nu \ll \mu$,  classical proofs  of the existence   of the Radon-Nikodym   derivative $d\nu/d\mu$   are  non constructive, see  e.g.  \cite[p. 429, vol. 1]{Bogachev2007}, \cite[\S 8.7, p. 336]{BBT2008} for historical comments.  For    a class  of  metrizable measurable spaces   $\Xx$,    the theorem of differentiation of measures   with a  constructive proof \footnote{The  proof  of the theorem  of differentiation   of measures on  complete $\sigma$-finite measure  spaces given in \cite[Chapter 8]{BBT2008}  utilizes
	the  existence  of lifting, whose  proof is non constructive.}    yields      not only  the  existence  of the
Radon-Nikodym  derivative, but   also computes the Radon-Nikodym density based on an appropriate metric. As far as we know, that is   the only  way to  get   an explicit  formula  for  the Radon-Nikodym derivative, see  \cite[p. 189]{SG1977},  \cite[p. 56]{Panangaden2009}  for  discussions   on the relation  between
	the  Radon-Nikodym  theorem  and the   theorem of differentiation of measures.

  The main ingredient   of   all known   proofs   of   the  theorem of differentiation of measures is   the    construction (or  the existence)
  of     a  differentiation basis, which  is based   on   a covering theorem.
  All covering theorems are based on the same idea: from an arbitrary cover of a 
  set in a metric space, one tries to select a subcover that is, in a certain sense, as 
  disjointed as possible. According  to \cite[Chapter 1]{Heinonen2001}, there are      three      (types) of covering   theorems:  the basic covering theorem, which is an extension of the classical Vitali theorem for $\R^n$ to     arbitrary metric space,  the  Vitali  covering theorem, which is   an  extension  the classical Vitali theorem   to the case of
  doubling  metric measure spaces,    and  the Besicovitch-Federer  theorem
   that has been    first proved by Besicovitch \cite{Besicovitch1945}  for
   the case  of  $\R^n$  and  then  extended by Federer  for  directionally $(\eps,M)$-limited subsets of    a metric space $X$ \cite[Theorem 2.8.14, p.150]{Federer1969}.  Examples of such     subsets  are compact
   subsets in a Riemannian manifold. The essence of Vitali theorems is that one finds a disjointed subcollection of the sets of a given cover that need not be a cover itself, but that when the radii are all enlarged by a fixed factor, covers everything. The essence of the Besicovitch theorems is to select a subcover so that each point is only covered a controlled number of times. Clearly, such theorems are useful when one has to estimate constants occurring in covering arguments.
   
   The  Besicovitch-Federer covering theorems has been revisited   for the 
   case   of $\R^n$  \cite{Sullivan1994}, \cite{EG1992},   and for any    finite  dimensional  normed vector space, which  results in a variation of the Besicovitch-Federer covering theorem  for arbitrary metric spaces \cite{Loeb1989}, and extended  in  \cite{Itoh2018} for  non  directionally limited  subsets in $\R^n$.

   In our  note   we   give  a     new proof  of the following  version  of  the Besicovitch-Federer theorem.

   \begin{theorem}\label{thm:besi}
   	
    Assume that   $\Ff$  is a collection of  open $4$-proper geodesic balls   in   a complete  Riemannian manifold $(M, g)$   such that  the  set  $A$  of the   centers  of  the balls in $\Ff$ is bounded. Then     one  can find $N \in \N^+$ and  subcollections $\Ff_1, \cdots,  \Ff_{N} \subset \Ff$  each of which  consists  of at most  countably  many disjoint   balls  such that  $A$  is covered   by the  balls  from
   		$\Ff_1 \cup \cdots  \cup \Ff_{N}$.
   	\end{theorem}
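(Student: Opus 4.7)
I would follow the classical two-phase Besicovitch strategy, transferred to the Riemannian setting through the $4$-proper hypothesis (which I read as: for each $B = B(c,r) \in \Ff$, the concentric ball $B(c,4r)$ is a proper geodesic ball, i.e.\ lies well within the injectivity and convexity radius so that $\exp_c$ is a diffeomorphism onto it with controlled metric distortion). The first phase is a greedy ball selection producing a countable subcover of $A$; the second is a coloring that partitions this subcover into $N$ pairwise-disjoint subfamilies.

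\emph{Greedy selection.} First I would reduce to bounded radii: since $A$ is bounded, any ball with radius exceeding, say, $\mathrm{diam}(A)$ can be split off into its own one-element subfamily, so set $R_0 := \sup\{r(B):B\in\Ff\} < \infty$. I then build $B_1,B_2,\ldots$ inductively by a standard transfinite greedy rule: having chosen $B_1,\ldots,B_j$, pick $B_{j+1}\in\Ff$ with center $c(B_{j+1})\notin B_1\cup\cdots\cup B_j$ and with $r(B_{j+1})\geq\tfrac12\sup\{r(B):B\in\Ff,\ c(B)\notin B_1\cup\cdots\cup B_j\}$. The classical argument shows the process terminates at a countable family whose union contains $A$: if some $x\in A$ were uncovered, then $x$ is the center of some ball in $\Ff$ of radius at least half of all subsequent radii, forcing the radii to stay bounded away from zero, contradicting the almost-disjointness of the shrunken balls $\tfrac13 B_j$ inside the bounded set around $A$.

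\emph{Multiplicity bound.} The heart of the argument is the dimensional bound: there exists $N=N(n)$, $n=\dim M$, such that every $x\in M$ lies in at most $N$ of the selected balls. Suppose $x\in B_{i_1}\cap\cdots\cap B_{i_k}$ with $i_1<\cdots<i_k$. For $a<b$, the greedy rule gives $r_{i_a}\geq\tfrac12 r_{i_b}$ and $c_{i_b}\notin B_{i_a}$, so
\[
d(c_{i_a},c_{i_b})\geq r_{i_a},\qquad d(x,c_{i_a})\leq r_{i_a},\qquad d(x,c_{i_b})\leq r_{i_b}\leq 2r_{i_a}.
\]
All centers $c_{i_a}$ lie in a bounded neighborhood of $x$ contained in a $4$-proper ball, so I would lift the situation to $T_xM$ via $\exp_x^{-1}$; Rauch comparison on the $4$-proper region gives uniform bi-Lipschitz control and a uniform lower bound on how much angles at $x$ can be compressed. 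The Euclidean version of the above three inequalities implies that the unit vectors $v_a := \exp_x^{-1}(c_{i_a})/|\exp_x^{-1}(c_{i_a})|\in S^{n-1}$ are pairwise separated by an angle at least some $\theta_0>0$ independent of $x$ and the chosen balls, whence $k\leq N(n)$ by a sphere-packing count on $S^{n-1}$.

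\emph{Coloring and main obstacle.} Given the multiplicity bound, I would color $\{B_j\}$ with $N$ colors so that same-colored balls are disjoint by the standard inductive rule: assign to $B_j$ the smallest color not used among the earlier $B_i$ with $B_i\cap B_j\neq\emptyset$; since any such intersection point lies in $B_i\cap B_j$ and hence (by the multiplicity bound applied at that point together with $B_j$) there are at most $N-1$ forbidden colors, $N$ colors suffice. The subfamilies $\Ff_1,\ldots,\Ff_N$ are then the color classes. The main obstacle is the comparison step inside the multiplicity argument: one must convert the Riemannian distance inequalities into genuinely Euclidean angle estimates that yield a constant depending only on $n$, and this is precisely where the $4$-proper hypothesis is essential---it ensures the entire relevant configuration sits inside a region where $\exp_x$ has bounded distortion with constants independent of $x$ and the particular ball radii, so the Euclidean Besicovitch count applies with a dimension-only $N(n)$.
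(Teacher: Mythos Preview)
Your overall architecture---greedy selection, an overlap bound, then a coloring---is exactly the paper's. The gaps are in the middle step and in how you use it.

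First, the three displayed inequalities do \emph{not} force a uniform lower bound on the angle at $x$. In $\R^n$ take $r_{i_a}=1$, $r_{i_b}=1.95$, place $c_{i_a}$ at distance $0.5$ and $c_{i_b}$ at distance $1.9$ from $x$ on the \emph{same} ray; then $d(c_{i_a},c_{i_b})=1.4\ge r_{i_a}$, $d(x,c_{i_a})<r_{i_a}$, $d(x,c_{i_b})<r_{i_b}<2r_{i_a}$, yet the angle is zero. The classical proof (and the paper's) does not bound pointwise multiplicity directly: it fixes $D_k$, sets $I_k=\{j<k:D_j\cap D_k\neq\emptyset\}$, and \emph{splits} $I_k$ into the comparable-radius part $M_k=\{j\in I_k: r_j\le 3r_k\}$ and the large-radius part $I_k\setminus M_k$. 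The first is bounded by a Bishop volume comparison (the shrunken balls $D_{r_j/3}(a_j)$ are disjoint and sit in $D_{5r_k}(a_k)$); the second requires a delicate angle estimate (the paper introduces a ``deformed angle'' via the law of cosines in distances and proves $\cos\theta_{def}\le 61/64$ through two sublemmas, then invokes Bishop--Crittenden to pass to the genuine angle at $a_k$). Your single-case angle argument skips exactly the work that makes the bound true.

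Second, even if you had a pointwise multiplicity bound $\le N$, your coloring deduction is a non sequitur: the earlier balls $B_i$ meeting $B_j$ may do so at \emph{different} points of $B_j$, so bounded multiplicity at each point does not bound $\#\{i<j:B_i\cap B_j\neq\emptyset\}$. What the coloring actually needs is the bound on $\#I_k$, which is precisely what the paper proves. Finally, your claim $N=N(n)$ is too strong: Rauch/Bishop comparison constants depend on curvature bounds over the bounded set $A$, so the paper's constant is $L(A)=c(A)+d(A)$, not a purely dimensional number.
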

Here, {\it $4$-proper} means that the radius of the ball is at most 1/4 of the injectivity radius of its center. 
A particular case  of Theorem \ref{thm:besi}  is the version  of   the   Besicovitch covering  theorem  for  the standard Euclidean space  $\R^n$  \cite{Besicovitch1945}, which has been   formulated as  Theorem 5.8.1
in \cite[p. 361, vol. 1]{Bogachev2007} based on  the   proof  of \cite[Theorem 1.27]{EG1992}.  There  are  three  differences  between   Theorem \ref{thm:besi}  and
Theorem 5.8.1 ibid.:  firstly  we  make  the assumption that $A$ is bounded, secondly,  the  geodesic balls are $4$-proper, and  thirdly,
the balls are open  instead of nondegenerate closed
as in  Theorem 5.8.1 ibid.  Note that  in  the Besicovitch-Federer theorem
\cite[Theorem 2.8.14]{Federer1969}   the similar family $\Ff$  also  consists  of closed balls. (In fact Theorem  \ref{thm:besi}    is  also valid  for closed balls,  but we  need to    track  and change,  if necessary,  several similar    strict or non-strict inequalities  in the proof.)  The main idea   of  our proof is to use  comparison  theorems  in Riemannian geometry to reduce the situation to the Euclidean one.

As a result,  we shall  prove  a     theorem of    differentiation  of measures  for   locally finite  Borel measures   on  complete  Riemannian manifolds \ref{thm:rnr}, which  yields  a  new formula for  the  Radon-Nikodym  derivative, used   in  our paper \cite{JLT2020}.


  
  
Let $\nu_1$ and  $\nu_2$  be  locally  finite   Borel measures  on $(M, g)$   such that $\nu_2 \ll \nu_1$. For
$x\in M$ we denote by  $D_r(x)$  the  open geodesic ball  of  radius $r$ in $M$ with center
in $x$  and   we set
$$\overline{D}_{\nu_1}  \nu_2 (x): = \lim _{r \to 0} \sup \frac{\nu_2 (D_r(x))}{\nu_1 (D_r(x))},$$
$$\underline{D}_{\nu_1} \nu_2 (x): = \lim _{r \to 0} \inf \frac{\nu_2 (D_r(x))}{\nu_1 (D_r(x))},$$
where  we set  $ \overline{D}_{\nu_1} \nu_2  (x)= \underline{D}_{\nu_1} \nu_2 (x) = +\infty$
if  $\nu_1 (D_r(x))=0$ for some $r>0$.

Furthermore  if $\overline{D}_{\nu_1} \nu_2 (x) = \underline{D}_{\nu_1}\nu_2 (x)$ then we denote their common value by 
$$D_{\nu_1}\nu_2 (x) : = \overline{D}_{\nu_1} \nu_2 (x) = \underline{D}_{\nu_1}\nu_2 (x)$$
which  is called {\it the derivative  of $\nu_2$ with respect  to $\nu_1$ at $x$.}

\begin{theorem}\label{thm:rnr}  Let $\nu_1$ and $\nu_2$ be two nonnegative locally finite Borel measures on  a  complete Riemannian  manifold $(M, g)$  such that $\nu_2 \ll \nu_1$.   Then    there  is a measurable   subset $S_0\subset M$  of zero $\nu_1$-measure   such that     the function $D_{\nu_1}\nu_2$ is defined  and finite  on $M \setminus  S_0$. Setting  $\tilde  D_{\nu_1} \nu_2 (x): = 0$ for
	$x \in  S_0$ and  $\tilde D_{\nu_1}\nu_2 (x) : = D_{\nu_1}\nu_2 (x)  $ for
$x \in M \setminus  S_0$,
  the
function $\tilde D_{\nu_1} \nu_2: M \to \R$  is measurable and serves as the Radon-Nikodym density of the measure $\nu_2$ with respect to $\nu_1$.
\end{theorem}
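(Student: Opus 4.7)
The plan is to adapt the Euclidean differentiation theorem (e.g.\ \cite[Theorem 1.30]{EG1992}) to the Riemannian setting, using Theorem~\ref{thm:besi} in place of the classical Besicovitch covering theorem. The preparatory step is a reduction to bounded sets: since $(M,g)$ is complete, every bounded set is relatively compact by Hopf--Rinow and the injectivity radius function is bounded below by a positive constant on its closure. Thus within any bounded open $\Omega\subset M$, every geodesic ball of sufficiently small radius centered in $\Omega$ is $4$-proper, so Theorem~\ref{thm:besi} is applicable. Moreover, locally finite Borel measures on $(M,g)$ are automatically Radon, and hence outer regular, which we will use.

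The key technical step is to derive from Theorem~\ref{thm:besi} a Vitali-type near-covering lemma: if $A$ is a bounded Borel set and $\Ff$ is a collection of open $4$-proper geodesic balls that is a fine cover of $A$ (meaning at each $x\in A$ there are balls $D_r(x)\in\Ff$ with $r$ arbitrarily small), then there is a countable pairwise disjoint subfamily $\Gg\subset\Ff$ with $\nu_1\bigl(A\setminus\bigcup_{D\in\Gg}D\bigr)=0$. This is produced by iteration: in each step, one of the $N$ disjoint subfamilies provided by Theorem~\ref{thm:besi} captures at least a fraction $1/N$ of the remaining $\nu_1$-mass of $A$, a finite sub-subfamily realizing most of this is extracted, and the argument recurses on the leftover (which is still fine-covered by the balls of $\Ff$ disjoint from all previously chosen ones). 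Combined with outer regularity of the two Radon measures and with $\nu_2\ll\nu_1$, this yields the two comparison inequalities
\begin{equation*}
\underline{D}_{\nu_1}\nu_2|_A \leq \alpha \;\Longrightarrow\; \nu_2(A)\leq \alpha\,\nu_1(A), \qquad \overline{D}_{\nu_1}\nu_2|_A \geq \beta \;\Longrightarrow\; \nu_2(A)\geq \beta\,\nu_1(A),
\end{equation*}
valid for every bounded Borel set $A$ and every $\alpha,\beta>0$.

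From these inequalities the rest of the proof follows the standard template. For each pair of rationals $0<\alpha<\beta$, the set $A_{\alpha,\beta}=\{\underline{D}_{\nu_1}\nu_2<\alpha\}\cap\{\overline{D}_{\nu_1}\nu_2>\beta\}$ satisfies $\beta\,\nu_1(A_{\alpha,\beta}\cap K)\leq\nu_2(A_{\alpha,\beta}\cap K)\leq\alpha\,\nu_1(A_{\alpha,\beta}\cap K)$ for every compact $K$, forcing $\nu_1(A_{\alpha,\beta})=0$; likewise, applying the second inequality with $\beta\to\infty$ together with the local finiteness of $\nu_2$ gives $\nu_1\{\overline{D}_{\nu_1}\nu_2=\infty\}=0$, and the points where $\nu_1$ vanishes on some ball form a $\nu_1$-null set by the separability of $M$. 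The countable union $S_0$ of these exceptional sets is therefore $\nu_1$-null, and on $M\setminus S_0$ the derivative $D_{\nu_1}\nu_2$ exists and is finite; measurability of $\tilde{D}_{\nu_1}\nu_2$ follows from the Borel measurability of $(x,r)\mapsto\nu_j(D_r(x))$. To identify $\tilde{D}_{\nu_1}\nu_2$ as the Radon--Nikodym density on a bounded Borel set $E$, I would stratify $E\setminus S_0$ into layers $E_k=\{x:(1+\eps)^k\leq\tilde{D}_{\nu_1}\nu_2(x)<(1+\eps)^{k+1}\}$, $k\in\Z$, apply both comparison inequalities to each $E_k$, and sum to obtain $\bigl|\nu_2(E)-\int_E\tilde{D}_{\nu_1}\nu_2\,d\nu_1\bigr|\leq\eps\,\nu_2(E)$; sending $\eps\to 0$, exhausting $M$ by bounded open sets, and using $\nu_2(S_0)=0$ (which follows from $\nu_2\ll\nu_1$) completes the proof.

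I expect the main obstacle to lie in the Vitali-type refinement of the second paragraph: Theorem~\ref{thm:besi} delivers only an $N$-fold subcover, whereas the sharp comparison inequalities (without any factor of $N$) which drive the entire differentiation argument require an almost-disjoint cover that captures the full $\nu_1$-mass. Carrying out the iterative extraction cleanly, while simultaneously tracking boundedness, $4$-properness, and the interplay between outer regularity of $\nu_1$ and $\nu_2$ on the Riemannian manifold, is the technical heart of the proof.
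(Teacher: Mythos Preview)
Your proposal is correct and follows essentially the same route as the paper: the paper's Lemma~\ref{lem:outer} is exactly your Vitali-type near-covering lemma proved by the same iterative extraction (one of the $N$ disjoint subfamilies from Theorem~\ref{thm:besi} captures a fixed fraction of the mass, and one recurses on the closed leftover), its Proposition~\ref{prop:est1} is your pair of comparison inequalities (stated for outer measures rather than Borel sets, which sidesteps any circularity with measurability), and the remainder---null sets $S(a,b)$ over rational pairs, lower-semicontinuity of $x\mapsto\nu_j(D_r(x))$ for measurability, and the $t^m$-stratification to identify the density---matches your outline step for step. The only point worth flagging is that the iteration requires the Besicovitch constant $N=L(A)$ to remain bounded as one passes to ever smaller leftover sets; the paper records this as a separate Claim~4 ($L(A\cap U)\le L(A)$), which you implicitly assume.
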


Theorem \ref{thm:rnr}  is   also different  from  Theorem 5.8.8 in \cite[vol.1]{Bogachev2007} in       defining  $\tilde D_{\nu_1}\nu_2$, since  we need  to apply   it   to a  family of Nikodym derivatives  in our  paper \cite{JLT2020}.

\section{Proof of Theorem \ref{thm:besi}}


Assume the  conditions of   Theorem \ref{thm:besi}.
Let $R : = \sup  \{ r:  D_r(a) \in \Ff\}$.  We can find $D_1 = D_{r_1}(a_1)\in \Ff$  with $ r_1  >  3 R/4$.  The balls   $D_j, j >1$,  are chosen  inductively as follows.  Let $A_j: = A \setminus  \cup_{i =1}^{j-1}  D_i$.  If the  set  $A_j$ is empty, then our  construction  is completed and, letting  $J = j -1$  we obtain $J$  balls $D_1, \cdots, D_J$.  If $A_j$ is nonempty, then 
we  choose  $D_j:=  D_{r_j} (a_j) \in \Ff$  such that
$$ a_j \in A_j  \text{ and }  r_j > \frac{3}{4} \sup \{ r: D_r(a)\in \Ff, a \in A_j\}.$$

In the case  of an infinite  sequence  of balls $D_j$  we set $J  = \infty$. 

\

\begin{lemma}\label{lem:claim1} The   balls $D_j$ satisfy the following properties

(a)  if $j > i$  then $r_j \le  4 r_i /3$,

(b) the balls $D_{r_j/3} (a_j)$ are disjoint   and if $J = \infty$  then $r_j \to 0$  as $ j \to \infty$,

(c) $A  \subset  \cup_{j=1}^J D_j$.
\end{lemma}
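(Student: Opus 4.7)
The plan is to prove (a), (b), (c) in order, each feeding into the next.

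For (a), I would argue directly from the inductive construction. If $j > i$, then $a_j$ was chosen outside $\bigcup_{k<j} D_k$, so in particular $a_j \in A_{i+1} \subset A_i$; since $D_{r_j}(a_j) \in \Ff$ is an admissible ball with center in $A_i$, the defining inequality $r_i > (3/4)\sup\{r : D_r(a) \in \Ff, a \in A_i\}$ immediately yields $r_j \le (4/3)\,r_i$.

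For (b), disjointness of the $D_{r_j/3}(a_j)$ follows from (a) by a short triangle-inequality argument: for $i < j$ we have $a_j \in A_j \subset A \setminus D_i$, so $d(a_i, a_j) \ge r_i$, and (a) then gives $r_i/3 + r_j/3 \le 7\,r_i/9 < r_i \le d(a_i, a_j)$. To show $r_j \to 0$ when $J = \infty$, I plan to use a volume argument. Since $A$ is bounded and $(M,g)$ is complete, Hopf--Rinow ensures $\overline{A}$ is compact; together with the uniform bound $r_j \le (4/3)R$, the disjoint balls all lie in a fixed compact set $K$ of finite volume. The $4$-proper hypothesis keeps each $D_{r_j/3}(a_j)$ well inside the injectivity radius of its center, and on the compact set $\overline{A}$ both sectional curvature and injectivity radius are uniformly bounded; Riemannian volume comparison then produces a constant $c > 0$ (depending only on $\overline{A}$ and $R$) with $\text{vol}(D_{r_j/3}(a_j)) \ge c\,r_j^{\,n}$, where $n = \dim M$. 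Disjointness forces $\sum_j r_j^{\,n} \le c^{-1}\,\text{vol}(K) < \infty$, hence $r_j \to 0$.

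For (c), the finite case is immediate: if $J < \infty$, the construction terminates precisely because $A_{J+1}$ is empty, which is exactly $A \subset \bigcup_{j=1}^J D_j$. In the infinite case I would argue by contradiction. If some $a \in A$ were not covered, then $a \in A_j$ for every $j$, and any $D_r(a) \in \Ff$ with $r > 0$ would force $r_j \ge 3r/4$ at every step, contradicting $r_j \to 0$ from part (b).

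The only substantive step is $r_j \to 0$, and it is where all the Riemannian hypotheses (completeness, finite dimension, boundedness of $A$, $4$-properness) come into play simultaneously through Hopf--Rinow and volume comparison. I expect this to be the main obstacle, though the ingredients are classical; the remainder is combinatorial bookkeeping built directly on the inductive definition of the $D_j$.
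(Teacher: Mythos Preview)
Your proposal is correct and follows essentially the same approach as the paper: (a) directly from the greedy choice of $r_i$, (b) from the separation $\rho_g(a_i,a_j)\ge r_i > r_i/3 + r_j/3$, and (c) by contradiction using $r_j\to 0$. The only difference is expository: where you spell out the $r_j\to 0$ step via Hopf--Rinow and volume comparison, the paper simply writes ``Since $A$ is bounded, $r_j$ goes to $0$'' immediately after the disjointness inequality---note that once you have $\rho_g(a_i,a_j) > r_i/3 + r_j/3$, a subsequence with $r_{j_k}\ge\delta$ would give infinitely many $(2\delta/3)$-separated points in the compact set $\overline{A}$, so the full volume-comparison machinery is not strictly needed here.
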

\begin{proof} Property (a)    follows   from the definition of $r_i$  and the inclusion $a_j \in A_j \subset A_i$.

Property  (b)  is a consequence of  the  following observation.    If $j > i$ then $a_j  \not \in D_i$ and hence by (a)   we have
\begin{equation}\label{eq:est0}
 \rho_g( a_i ,a_j) \ge  r_i >  \frac{r_i}{3} + \frac{r_j}{3}.
 \end{equation}
Since  $A$ is bounded,  $r_j$   goes to $0$  as $j \to \infty$ if  $J = \infty$.

Finally  (c) is obvious  if $J < \infty$.   If $J = \infty$  and $D_r(a) \in \Ff$ then there exists $r_j $ with  $r_j <  3r /4$  by (b). Hence $a \in  \cup_{i =1} ^{j-1}  D_i$ by   our construction of $r_j$.  This completes the  proof of Lemma \ref{lem:claim1}.

\end{proof}

We fix  $k > 1$  and let
\begin{equation}\label{eq:ikm}I_k : = \{ j:  j <k , \,  D_j \cap D_k  \not = \emptyset\}, \:  M_k : = I_k \cap \{ j : r_j \le  3 r_k\}.
\end{equation}

\begin{lemma}\label{lem:claim2}    There is a number  $ c(A)$  independent  of $k$ such that    $\# M_k \le  c(A)$.
\end{lemma}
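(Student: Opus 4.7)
\textbf{Plan for Lemma \ref{lem:claim2}.}

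Fix $k$ and let $j\in M_k$. The definition of $M_k$ gives $r_j\le 3r_k$, and since $j<k$, Lemma~\ref{lem:claim1}(a) applied with $i=j$ yields $r_k\le\tfrac{4}{3} r_j$, so $\tfrac{3}{4} r_k \le r_j \le 3 r_k$. Moreover $D_j\cap D_k\neq\emptyset$ forces $\rho_g(a_j,a_k)<r_j+r_k\le 4r_k$, and since $r_j/3\le r_k$ one checks that $D_{r_j/3}(a_j)\subset D_{5r_k}(a_k)$. By Lemma~\ref{lem:claim1}(b) the subcollection $\{D_{r_j/3}(a_j)\}_{j\in M_k}$ is pairwise disjoint. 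So bounding $\#M_k$ reduces to a packing problem: how many pairwise disjoint open balls of radius at least $r_k/4$ can fit inside the ball $D_{5r_k}(a_k)$?

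To solve this packing problem uniformly in $k$, I would pass to a fixed compact region and invoke Riemannian volume comparison. Since $M$ is complete and $A$ is bounded, Hopf--Rinow gives compactness of $\overline A$, and since $R:=\sup\{r:D_r(a)\in\Ff\}<\infty$, the closed tubular neighborhood $\overline K:=\{p\in M:\rho_g(p,\overline A)\le 5R\}$ is also compact. On $\overline K$ the sectional curvature is uniformly bounded, say $|\sec|\le\Lambda$, and $4$-properness guarantees that every relevant ball lies well within the injectivity radius of its center. Bishop--Gromov (upper bound) together with G\"unther's inequality (lower bound in the small-radius regime) and a continuity/compactness argument (for radii bounded away from zero) produce positive constants $C_1,C_2$ depending only on $\overline A$, $R$, and $n=\dim M$, such that
$$C_1\, r^n \le \mathrm{vol}(D_r(p)) \le C_2\, r^n \qquad \text{for all } p\in\overline A,\ r\in(0,5R].$$

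Summing the volumes of the pairwise disjoint balls inside $D_{5r_k}(a_k)$ and inserting this two-sided bound at $a_k$ and at each $a_j$ (using $r_j/3\ge r_k/4$) yields
$$\#M_k \cdot C_1 \left(\frac{r_k}{4}\right)^{\!n} \;\le\; \sum_{j\in M_k}\mathrm{vol}(D_{r_j/3}(a_j)) \;\le\; \mathrm{vol}(D_{5r_k}(a_k)) \;\le\; C_2 (5r_k)^n,$$
whence $\#M_k \le 20^n\, C_2/C_1 =: c(A)$, independent of $k$.

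The main technical obstacle is the uniform two-sided volume estimate: G\"unther's inequality only delivers $\mathrm{vol}(D_r(p))\ge c r^n$ for $r$ below $\min(\mathrm{injrad}(p),\pi/\sqrt{\Lambda})$, so radii in the intermediate range must be absorbed by invoking continuity of $(p,r)\mapsto \mathrm{vol}(D_r(p))/r^n$ on the compact set $\overline A\times[\delta,5R]$ for appropriate $\delta>0$. This is exactly where both the completeness of $M$ (to apply Hopf--Rinow) and the boundedness of $A$ are essential for producing a single constant $c(A)$.
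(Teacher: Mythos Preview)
Your proposal is correct and follows essentially the same route as the paper: establish $D_{r_j/3}(a_j)\subset D_{5r_k}(a_k)$, use the disjointness from Lemma~\ref{lem:claim1}(b), and apply two-sided Riemannian volume comparison to arrive at exactly the bound $\#M_k\le 20^{n}\,C_2/C_1$. The paper simply cites the Bishop comparison theorem for both inequalities and invokes the boundedness of $A$ for uniformity; your explicit treatment of G\"unther's inequality and the compactness argument for intermediate radii is a helpful clarification of that step rather than a different method.
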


\begin{proof}  If $j \in M_k$  and $x \in  D_{r_j/3}(a_j)$ then the balls  $D_j$ 
and $D_k$  are open  and have nonempty intersection  and $r_j \le  3 r_k$, hence
 $$\rho_g(x, a_k) \le \rho_g(x, a_j) + \rho_g(a_j ,a_k)  < \frac{r_j}{3}  + r_j + r_k < 5r_k. $$

It follows  that  $D_{r_j/3} ( a_j) \subset D_{5r_k} (a_k)$. Denote by $vol_g$  the Riemannian volume on $(M, g)$.
  By the  disjointness  of $D(a_j,  r_j/3)$  and  the   boundedness  of $A$, taking into  account the Bishop volume comparison theorem \cite[Theorem 15, \S 11.10]{BC1964},   see  also \cite{Le1993} for a  generalization,  there
  exists  a number $c_1(A)$(depending on an upper bound for the Ricci curvature and on the local topology, but the latter will play no role for 4-proper balls) such that

\begin{equation}\label{eq:est2}
vol_g( D_{5r_k}(a_k))  \ge \sum_{j \in M_k} vol_g(D_{r_j/3}(a_j))\ge  c_1(A)\sum_{j \in M_k}  (\frac{r_j}{3})^n.
\end{equation}

Using   property (a) in Claim 1, we   obtain from (\ref{eq:est2})
\begin{equation}\label{eq:est3}
vol_g (D_{5r_k}(a_k)) \ge \sum_{j \in M_k} c_1(A) (\frac{r_k}{4})^n   = \#(M_k)  c_1(A) (\frac{r_k}{4})^n.
\end{equation}
By the  Bishop comparison theorem  there exists  a number $c_2 (A)$ (depending on a lower bound for the Ricci curvature) such that $vol_g(D_{5r_k}) (a_k) \le  c_2 (A)\cdot (5r_k)^n$. In combination  with (\ref{eq:est3})  we obtain
\begin{equation}\label{eq:est4}
\#(M_k)\le \frac{c_2(A)}{c_1(A)} 20 ^n.
\end{equation}
This completes the proof of Lemma \ref{lem:claim2}.
\end{proof}

\begin{lemma}\label{lem:claim3}  There  exists   a number  $d(A)$ independent of $k$ such that   $\#(I_k \setminus M_k) \le  d(A)$.
\end{lemma}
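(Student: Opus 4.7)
The plan is to establish a uniform angular separation: there exists $\theta_0 > 0$, depending only on $A$, such that for any distinct $i, j \in I_k \setminus M_k$, the Riemannian angle at $a_k$ between minimizing geodesics from $a_k$ to $a_i$ and from $a_k$ to $a_j$ is at least $\theta_0$. Once this is shown, the initial unit tangent vectors $v_j \in S_{a_k} M \cong S^{n-1}$ form a $\theta_0$-separated subset of the unit sphere, so $\#(I_k \setminus M_k)$ is bounded by the $\theta_0$-packing number of $S^{n-1}$, a constant depending only on $n$ and $\theta_0$.

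Fix $i < j$ with $i, j \in I_k \setminus M_k$ and set $s_i := \rho_g(a_k,a_i)$, $s_j := \rho_g(a_k,a_j)$, $t := \rho_g(a_i,a_j)$. From $D_j \cap D_k \neq \emptyset$ and $a_k \notin D_j$ (which holds because $j < k$ implies $a_k \in A_k \subset M \setminus D_j$) we have $r_j \le s_j < r_j + r_k$, and similarly $r_i \le s_i < r_i + r_k$. Moreover $a_j \notin D_i$ gives $t \ge r_i$, property~(a) of Lemma~\ref{lem:claim1} gives $r_j \le (4/3) r_i$, and $i, j \in I_k \setminus M_k$ gives $r_i, r_j > 3 r_k$. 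In the Euclidean comparison triangle with side lengths $s_i, s_j, t$, the angle $\tilde\theta$ at the vertex corresponding to $a_k$ satisfies
\[
\sin^2(\tilde\theta/2) \;=\; \frac{t^2 - (s_i - s_j)^2}{4 s_i s_j} \;\ge\; \frac{r_i^2 - (s_i - s_j)^2}{4 s_i s_j}.
\]
A direct case analysis on the sign of $s_i - s_j$---using $|s_i - s_j| \le r_j + r_k - r_i < (2/3) r_i$ when $s_i \le s_j$, and $r_i^2 - (s_i - s_j)^2 \ge (r_j - r_k)(2 r_i + r_k - r_j) > 0$ when $s_i > s_j$, together with $s_i s_j < (4/3)^2 r_i r_j \le (64/27) r_i^2$---yields a universal constant $c_0 > 0$ with $\sin^2(\tilde\theta/2) \ge c_0$.

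To transfer this Euclidean estimate to the actual Riemannian angle $\theta = \angle(v_i, v_j)$, I would invoke Toponogov's angle comparison theorem. The closure $\bar A$ is compact (by completeness of $(M, g)$ and boundedness of $A$), so on a compact neighborhood of $\bar A$ containing every $4$-proper ball with center in $\bar A$, the sectional curvature admits a lower bound $K_0 \in \R$. The $4$-properness, combined with the bounds $s_i, s_j, t = O(R)$ where $R := \sup\{r : D_r(a) \in \Ff\}$, ensures that the three sides of the geodesic triangle $a_k a_i a_j$ are minimizing and lie in suitable injectivity neighborhoods, so Toponogov applies and gives $\theta \ge \tilde\theta_{K_0}$, the angle at the $a_k$-vertex of the comparison triangle in the model space of constant sectional curvature $K_0$. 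Since all side lengths are uniformly bounded in terms of $A$ and $R$, and $\tilde\theta_{K_0}$ depends continuously on the data (reducing to $\tilde\theta$ for $K_0 = 0$), we conclude $\theta \ge \theta_0(A) > 0$.

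The main obstacle is the Euclidean estimate itself: the constraint $t \ge r_i$ is indexed by construction order rather than by the comparative sizes of the radii, so the case analysis must branch on both the sign of $s_i - s_j$ and the relative sizes of $r_i$ and $r_j$ (in particular the subcase $r_i \gg r_j$ requires the refined bound via $(r_j - r_k)(2 r_i + r_k - r_j)$ rather than the naive $r_i^2 - (r_i - 2 r_k)^2$). The Riemannian transfer via Toponogov is then routine, provided the minimizing property of the geodesic sides is secured through $4$-properness and compactness of $\bar A$.
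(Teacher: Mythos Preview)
Your proposal follows the same overall strategy as the paper: show that for distinct $i,j \in I_k \setminus M_k$ the Euclidean comparison angle at $a_k$ (which the paper calls the ``deformed angle'' $\theta_{def}(a_i,a_j)$, defined by exactly your law-of-cosines formula) is bounded below by a universal constant, transfer this to a lower bound on the actual Riemannian angle via curvature comparison, and finish by a packing count on $S^{n-1}$. Your case split on the sign of $s_i - s_j$ is a different, somewhat more direct, route to the Euclidean estimate than the paper's Claims~1 and~2 (which branch instead on whether $a_i \in D_j$); both are valid. For the transfer step the paper invokes the Bishop--Crittenden bound on the differential of $\exp$ (two-sided sectional curvature control) to get $\theta_{a_k}(a_i,a_j) \ge b(A)\,\theta_{def}(a_i,a_j)$ directly, whereas you use Toponogov with only a lower bound $K_0$ to obtain $\theta \ge \tilde\theta_{K_0}$.

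The one soft spot in your write-up is the last inference, that $\tilde\theta_{K_0} \ge \theta_0(A) > 0$ follows because ``$\tilde\theta_{K_0}$ depends continuously on the data, reducing to $\tilde\theta$ for $K_0=0$''. Here $K_0$ is fixed and the side lengths may be arbitrarily small, so continuity in $K_0$ is not the relevant mechanism; what you need is a uniform estimate over all admissible $(s_i,s_j,t)$ for the given $K_0$. This is true and easy to supply (for $K_0 \le 0$, the hyperbolic law of cosines together with $u \le \sinh u \le C(A)\,u$ on the bounded range of scaled side lengths yields $1 - \cos\tilde\theta_{K_0} \ge c(A)\,(1 - \cos\tilde\theta) \ge 2c(A)c_0$), but it deserves an explicit line rather than an appeal to continuity.
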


\begin{proof} 
Let us  consider   two  distinct  elements  $i, j \in I_k \setminus M_k$. By (\ref{eq:ikm})  we have
\begin{equation}\label{eq:est5}
 1 < i, j   < k, \:   D_i \cap D_k  \not = \emptyset, \: D_j \cap D_k \not= \emptyset, \:  r_i > 3 r_k, \:   r_j > 3 r_k.
 \end{equation}
For notational simplicity   we shall   redenote $\rho_g(a_k, a_i) $ by $|a_i|$.  Then  (\ref{eq:est5})  implies
\begin{equation}\label{eq:est6}
|a_i|  <   r_i + r_k \text{ and } |a_j|  <     r_j  + r_k. 
\end{equation}
Let  $\theta_{def}(a_i, a_j) \in [0,\pi]$ be the  deformed angle  between  the two geodesic  rays $(a_k,a_i)$ and $(a_k, a_j)$,  connecting $a_k$ with   $a_i$   and $a_j$ respectively, which  is defined  as  follows
$$ \theta_{def}(a_i, a_j): = \arccos   \frac{|a_i|^2  + |a_j|^2  - \rho_g(a_i , a_j) ^2}{2|a_i| |a_j|}.$$

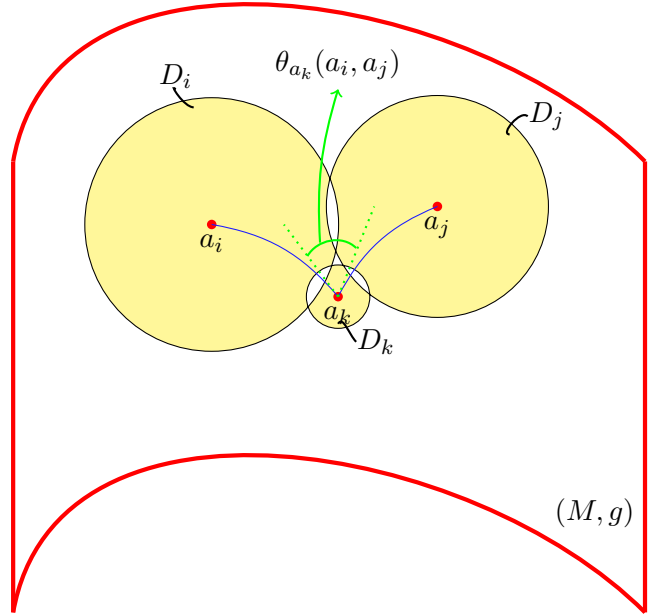
\begin{figure}[htb!]
\begin{center}
\begin{tikzpicture}[scale=1.2]
\node[left] at (3,1.1) {$(M,g)$} ; 
\draw[color=red,ultra thick] (-4, 5)  to [out=80] (3,5);
\draw[color=red,ultra thick] (-4, 0)  to [out=80] (3,0);
\draw[color=red,ultra thick] (-4, 0) -- (-4,5);
\draw[color=red,ultra thick] (3, 0) -- (3,5);


\coordinate (Ai) at (-1.8,4.3);
\coordinate (Aj) at (.7,4.5);
\coordinate (Ak) at (-0.4,3.5);

 \filldraw[fill=yellow!50, even odd rule]  (Ak) circle (10pt) (Ai) circle (40pt) (Aj) circle (35pt);
  \fill[red] (Ak) circle (1.5pt);
  \node[below] at (Ak) {$a_k$} ; 
  
  \fill[red] (Ai) circle (1.5pt);
  \node[below] at (Ai) {$a_i$} ;  

  \fill[red] (Aj) circle (1.5pt);
  \node[below] at (Aj) {$a_j$} ;              
    
  \draw[thick] (-2.0, 5.6)  to [out=180] (-2.2,5.8);
  \node[above] at (-2.2,5.7) {$D_i$} ; 
                
 \draw[thick] (1.5,5.3)  to [out=180] (1.7,5.5);
  \node[right] at (1.6,5.5) {$D_j$} ; 
  
   \draw[thick] (-.4,3.2)  to [out=60] (-0.2,3);
  \node[right] at (-.3,3) {$D_k$} ; 
  
  \draw[thick, green] (-.74,3.95)  to [out=60] (-.2,4.04);
  \draw[thick, green,->] (-0.6,4.1) to [bend left=10] (-0.4,5.8);
  \node[above] at (-0.4,5.8) {$\theta_{a_k}(a_i, a_j)$};


   \draw[thin, blue!90] (Ai)  to [bend left = 20] (Ak);
   \draw[thin, blue!90]  (Ak)  to [bend left=20] (Aj);
   \draw[thick, dotted, green] (Ak) to (-1,4.3);
  \draw[thick, dotted, green] (Ak) to (0,4.5);
 
 \end{tikzpicture}
 \end{center}
 \caption{The $\theta_{def}(a_i,a_j)$ vs $\theta_{a_k}(a_i, a_j)$ (defined below).}
  \end{figure}

  We shall prove the   estimate
\begin{equation}\label{eq:est7}
\theta_{def}(a_i, a_j)  \ge \theta_0 : = \arccos 61/64 >0.
\end{equation}
By  the construction, see also (\ref{eq:est0}), we have $a_k \not \in  D_i \cup D_j$  and  $r_i \le |a_i|$, $ r_j \le  |a_j|$.  W.l.o.g. we assume that   $|a_i|\le |a_j|$.  By (\ref{eq:ikm})   and (\ref{eq:est6})   we  obtain
\begin{equation}\label{eq:est8}
3r_k < r_i \le  |a_i| <  r_i + r_k, \:   3r_k < r_j \le  |a_j| <  r_j + r_k, \:  |a_i| \le |a_j|.
\end{equation}
\

We need   two more claims  for the proof   of (\ref{eq:est7}).

\

{\it Claim 1.} If  $\cos  \theta_{def}(a_i, a_j) > 5/6$ then   $ a_i \in D_j$.

{\it Proof of Claim 1.}  It suffices to show  that   if $a_i \not \in D_j$ then
$\cos \theta_{def}(a_i, a_j) \le 5/6$. Assume  that $a_i \not \in D_j$.   We shall consider   two possibilities, first  assume that $\rho_g(a_i , a_j) \ge  | a_j|$. Then
our assertion follows from the following estimates
\begin{equation}\label{eq:est9}
\cos \theta_{def} (a_i, a_j)= \frac{|a_i|^2  + |a_j|^2  - \rho_g(a_i , a_j)^2}{2|a_i| |a_j|}\le \frac{|a_i|}{2 | a_j|} \le \frac{1}{2} < \frac{5}{6}.
\end{equation}
Now assume  that  $\rho_g(a_i, a_j) \le    |a_j|$. Then  
\begin{eqnarray}\nonumber %
\cos \theta_{def}(a_i, a_j)  = \frac{|a_i|^2 +|a_j|^2  - \rho_g(a_i, a_j)^2}{2 |a_i| |a_j|} \le 
\nonumber\\
\frac{|a_i|}{2|a_j|} + \frac{(|a_j|  - \rho_g(a_i, a_j))( |a_j| + \rho_g(a_i, a_j))}{2 |a_i||a_j|}\nonumber \\
 \le \frac{1}{2} + \frac{|a_j| - \rho_g(a_i, a_j)}{ |a_i|}\nonumber\\
 \le    \frac{1}{2} + \frac{r_j + r_k - r_j} {r_i} \le \frac{5}{6}	\label{eq:est10}
\end{eqnarray}
 where in the   second inequality we  use  the assumption  $ |a_j| +  \rho_g(a_i, a_j) \le 2 |a_j|$,
   in the
 third inequality  we use   $|a_j| \le  r_j + r_k$   and   taking into account  
 $a_i \not \in D_j$ we have  $r_j  \le \rho_g(a_i, a_j) $,   we  also use   $r_i  \le |a_i|$  from (\ref{eq:est8}),
 and  in the last inequality  we use $3r_k < r_i$ from  (\ref{eq:est5}).
 This completes  the proof of Claim 1.
\

{\it  Claim 2.}   If $a_i \in D_j$ then
\begin{equation}\label{eq:est11}
0 \le  \rho_g(a_i, a_j) + |a_i| - |a_j|  \le \frac{8}{3}(1- \cos  \theta_{def}(a_i, a_j))|a_j|.
\end{equation}

{\it Proof of Claim 2.}    We   utilize the  proof  of \cite[(5.8.3), p. 363, vol. 2]{Bogachev2007} . Since $a_i \in D_j$ we  have $i < j$.  Hence $a_j \not \in  D_i$  and therefore  $\rho_g(a_i, a_j) \ge  r_i$.  Keeping  our convention  that $|a_i| \le |a_j|$ we have
$$ 0 \le \frac{\rho_g(a_i, a_j) + |a_i| - |a_j|}{|a_j|} \le \frac{\rho_g(a_i, a_j) + |a_i|  - |a_j|}{|a_j|} \frac{\rho_g(a_i, a_j)-|a_i| + |a_j|}{\rho_g(a_i, a_j)} $$
$$ = \frac{\rho_g(a_i, a_j)^2 - (|a_j|  -|a_i|)^2}{|a_j| \rho_g(a_i, a_j)}  = \frac{2|a_i|(1-\cos \theta_{def}(a_i, a_j))}{\rho_g(a_i, a_j)}$$
$$ \le \frac{2 (r_i + r_k)(1-\cos \theta_{def}(a_i, a_j))}{r_i} \le \frac{8}{3}(1-\cos \theta_{def}(a_i, a_j)).$$
Here in the inequality before  the last we use  the above inequality  $r_i <  \rho_g(a_i, a_j)$  and  $|a_i| < r_i + r_k$ from (\ref{eq:est8}). This completes the  proof of Claim 2.

\

{\it Continuation of the proof of (\ref{eq:est7}).}  If $\cos  \theta_{def} (a_i, a_j)\le 5/6$,  then\\  $\cos \theta_{def}(a_i, a_j) < 61/64$. If $\cos \theta _{def} (a_i, a_j)> 5/6$  then $a_i \in D_j$  by  Lemma \ref{lem:claim1}.
Then $i < j$ and hence $a_j \not \in D_i$. It follows that  $r_i\le \rho_g(a_i, a_j) < r_j$. Recall  by Lemma \ref{lem:claim1} (a) $r_j  \le 4r_i/3$.  Taking into account $ r_j  > 3r_k$ from (\ref{eq:ikm})   we obtain
$$\rho_g(a_i, a_j) +|a_i| - |a_j| \stackrel{(\ref{eq:est8})}{ \ge } r_i + r_i - r_j -r_k  \ge \frac{r_j}{2} - r_k  \ge   \frac{1}{8} (r_j + r_k)  \ge \frac{1}{8} |a_j| $$
which   in combination with (\ref{eq:est11}) yields  
$|a_j|/8  <  8 (1 - \cos \theta_{def}(a_i, a_j)) |a_j|/3$. Hence  $\cos \theta_{def}(a_i, a_j) \le 61/64$. This completes  the proof of   estimate  (\ref{eq:est7}).

\

 In the  next step we  shall   prove  the existence   of  a  lower bound    for the  angle   $\theta_{a_k}(a_i, a_j)$ between
  the two  geodesic  rays $(a_k, a_i)$ and $(a_k, a_j)$, namely  $\theta_{a_k}(a_i, a_j)$ is the angle between   two vectors $\vec{a_i}$ and $\vec{a_j}$  on the  tangent space $T_{a_k} M^n$  provided  with the    restriction of  the  metric $g$  to $T_{a_k}  M^n$, where $\vec{a_i}$ (resp. $\vec{a_j}$)  is
  the  tangent  vector    at $a_k$ of the  geodesic  $(a_k, a_i)$ (resp.   of the geodesic  $(a_k, a_j)$.)
  
  \
  
  {\it Claim 3.}  There exists  a  positive   number  $\alpha(A)$  independent  of  $k, i, j$ such that  $\theta_{a_k}(a_i, a_j) \ge  \alpha (A)$.
  
  {\it Proof  of  Claim 3.}   Since  $A$  is bounded, by the Bishop-Crittenden comparison theorem \cite[Theorem 15, \S11.10]{BC1964}  that estimates  the  differential of the exponential map  via the  sectional  curvature of the  Riemannian manifold  there  exists  a constant  $b(A)$   independent of $a_i, a_j, a_k$  and  sectional curvature  bounds    for  $A \subset  M$  such that
  $\theta_{a_k}(a_i, a_j) >  b(A)\cdot \theta_{def} (a_i, a_j) $.  Combining this with  (\ref{eq:est7})   implies  Claim 3.

{\it Continuation of the proof of  Lemma \ref{lem:claim3}}. Denote by ${\rm inj\, rad}_M(x)$ the injectivity radius  of $M$  at $x$. Let  $r_A : =\inf_{x\in A} {\rm inj\, rad}_M (x)$. 
	Since  $A$  is bounded,    $r_A >0$.

$\bullet$  Let $\delta(A)$ be the largest     positive  number such 
 that: 
 
 (i)  $\delta (A) \le r_A /8$,  
 
 (ii)  For any   $x\not = y\not = z\not =x \in A$    satisfying the following  relations
 $$ \rho_g(x, y) \le \frac{r_A}{4} \text{ and } \rho_g (y, z) \le   \rho_g(x, y) \cdot \delta (A)$$
 we have  $\theta _{x}(y, z)\le \alpha (A)$.
 
 The  existence  of $\delta(A)$  follows  from   the boundedness  of $A$  and  the Bishop-Critenden comparison theorem.
 
 $\bullet$  Let  $d(A)$ be   the smallest  natural   number    such that   for any  $x \in A$   and  any  $r \in (0, r_A/4)$  we can cover  the  geodesic sphere  $S(x, r)$   of radius  $r$  centered  at $x$  by at most  $d(A)$ balls   of   radius $r \cdot  \delta (A)$. The  existence of  $ d(A)$   follows from the boundedness  of $A$  and   Bishop's comparison theorem (lower Ricci bound).

  Claim 3   implies  that
  $\#(I_k \setminus M_k)  \le d(A)$.  This completes  the proof of
 Lemma \ref{lem:claim3}.
\end{proof}

 {\it Completion of the  proof  of  Theorem \ref{thm:besi}.}   Lemmas \ref{lem:claim2} and \ref{lem:claim3} imply that   $\# (I_k)  \le  c(A) + d(A)$.

  Now we  make  a choice  of  $\Ff_i$   in the same  way as  in  the proof of  Theorem 5.8.1 ibid.    
   Set  $L(A): = c(A) + d(A)$.
  We define  a mapping 
  $$\sigma: \{ 1, 2, \cdots, \} \to \{ 1, \cdots, L(A)\}$$
  as follows: $\sigma (i) = i$ if $ 1 \le i \le  L(A)$.  If $k \ge L(A)$, we define  $\sigma(k+1)$ as follows.  Since
  $$\#(I_{k+1}) =\# \{j|\,  1\le j \le k, D_j\cap D_{k+1} \not =\emptyset \} < L(A)$$
   there exists a smallest number  $l \in \{ 1, \cdots,  L(A)\}$ with
   $D_{k+1} \cap D_j = \emptyset$  for all $j \in  \{ 1, \cdots, k\}$  such that $\sigma(j) = l$.  Then we set $\sigma(k+1) : =  l$. Finally, let
   $$\Ff_j : = \{ D_i: \sigma(i) = j\} , j \le  L(A).$$
   By definition of $\sigma$, every collection  $\Ff_j$  consists of  disjoint balls. Since every   ball  $D_i$ belongs  to some collection $\Ff_j$, we have
   $$A \subset  \bigcup_{j=1}^J D_j = \bigcup_{j=1}^{L(A)}\bigcup_{D\in D_j} D.$$
This completes  the proof  of Theorem \ref{thm:besi}
 

\section{Proof  of Theorem \ref{thm:rnr}}\label{sec:mthm}

The proof   of  Theorem \ref{thm:rnr}  uses  the  argument  in the proof  of \cite[Theorem 5.8.8, p. 368, vol. 1]{Bogachev2007},  based  on \cite{EG1992},  with a  modification  to  deal with  a    general complete
Riemannian metric $g$. Furthermore,  assuming the conditions   in Theorem \ref{thm:rnr},   we     modify  $D_{\nu_1}\nu_2$  a bit to get  a function $\tilde D_{\nu_1}\nu_2$  defined 
on $M$. This  is necessary  for dealing with   a family  of    Radon-Nikodym derivatives, considered in \cite{JLT2020}.

First  we shall show  that  $D_{\nu_1}\nu_2 (x)$  exists  and  is finite  for $\nu_1$-a.e..   Let $S : = \{x:  \overline D_{\nu_1}\nu_2 (x) = +\infty  \} $.  We denote  by $\mu^*$  the outer measure   defined  by a locally finite  Borel measure $\mu$ on $M$. To show
$\nu_1 (S) = 0$   we  need  the following

\begin{proposition}\label{prop:est1}  Let  $0< c < \infty$  and $A$    a subset  of $M$.
	
	(i)  If $A  \subset \{ x:  \underline{D}_{\nu_1}\nu_2(x) \le c \}$  then 
	$\nu ^*_2 (A) \le  c\, \nu_1^*(A)$.
	
	(ii)   If $A  \subset \{ x:  \overline{D}_{\nu_1}\nu_2(x) \ge c \}$  then   $\nu ^*_2 (A) \ge  c\, \nu_1^*(A)$.
\end{proposition}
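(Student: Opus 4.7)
The plan is to prove (i) in detail; statement (ii) will follow by swapping the roles of $\nu_1$ and $\nu_2$ and reversing the inequalities throughout. If $\nu_1^*(A)=\infty$ the inequality is trivial, so assume $\nu_1^*(A)<\infty$. Since $M$ is a $\sigma$-compact complete Riemannian manifold and the measures are locally finite, a straightforward countable decomposition reduces the problem to $A$ bounded, in which case $\overline{A}$ is compact by Hopf--Rinow. Fix $\epsilon\in(0,c)$. A locally finite Borel measure on $M$ is Radon, so outer regularity gives an open bounded $U\supset A$ with $\nu_1(U)\le\nu_1^*(A)+\epsilon$; note also $\nu_2(U)<\infty$.

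Next, I would build a fine cover by $4$-proper balls respecting the ratio condition: for each $x\in A$, the hypothesis $\underline{D}_{\nu_1}\nu_2(x)\le c$ supplies arbitrarily small $r>0$ with $\nu_2(D_r(x))\le(c+\epsilon)\nu_1(D_r(x))$, and shrinking further one may require $D_r(x)\subset U$ and $r\le\tfrac14\,{\rm inj\,rad}_M(x)$, since the injectivity radius is positive and lower semi-continuous on the bounded set $A$. Let $\Ff$ denote the collection of all such $4$-proper balls; it is a fine cover of $A$ to which Theorem \ref{thm:besi} can be applied.

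The main step, and the principal obstacle, is to extract from $\Ff$ a countable pairwise disjoint subcollection $\Gg$ with $\nu_2^*\bigl(A\setminus\bigcup\Gg\bigr)=0$. My approach is a Vitali-style iteration driven by Theorem \ref{thm:besi}. Setting $A_0:=A$, at stage $n$ one lets $\Ff^{(n)}\subset\Ff$ consist of balls with centres in $A_n$ whose closures lie in the open set $U\setminus\bigcup_{m<n}\bigcup\Gg_m$; this is still a fine cover of $A_n$. Theorem \ref{thm:besi} then produces disjoint subcollections $\Ff^{(n)}_1,\ldots,\Ff^{(n)}_{N}$ covering $A_n$, with $N$ bounded uniformly in $n$ since the constants in the proof depend only on curvature and injectivity bounds over the fixed bounded set $A$. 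Pigeonholing $\nu_2^*(A_n)\le\sum_{j}\nu_2^*\!\bigl(A_n\cap\bigcup\Ff^{(n)}_j\bigr)$ produces an index $j_n$ with $\nu_2^*\!\bigl(A_n\cap\bigcup\Ff^{(n)}_{j_n}\bigr)\ge\nu_2^*(A_n)/N$, and by continuity of the Radon outer measure $\nu_2^*$ from below one selects a finite subcollection $\Gg_n\subset\Ff^{(n)}_{j_n}$ with $\nu_2^*\!\bigl(A_n\cap\bigcup\Gg_n\bigr)\ge\nu_2^*(A_n)/(2N)$. Because $\bigcup\Gg_n$ is open and hence $\nu_2$-measurable, Carath\'eodory splitting gives $\nu_2^*(A_{n+1})\le(1-\tfrac1{2N})\nu_2^*(A_n)$ for $A_{n+1}:=A_n\setminus\bigcup\Gg_n$, so iteration forces $\nu_2^*(A_n)\to 0$ and $\Gg:=\bigcup_n\Gg_n$ satisfies the required property.

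Finally, summing the ratio inequality over $D\in\Gg$ (a countable disjoint family contained in $U$) yields
\[
\nu_2^*(A)\le \nu_2\!\Bigl(\bigcup\Gg\Bigr)+\nu_2^*\!\Bigl(A\setminus\bigcup\Gg\Bigr)=\sum_{D\in\Gg}\nu_2(D)\le(c+\epsilon)\sum_{D\in\Gg}\nu_1(D)\le(c+\epsilon)(\nu_1^*(A)+\epsilon),
\]
and letting $\epsilon\downarrow 0$ proves (i). For (ii), the same scheme works with outer regularity applied to $\nu_2$, fine balls satisfying $\nu_2(D)\ge(c-\epsilon)\nu_1(D)$, and the Vitali extraction run with $\nu_1$ in place of $\nu_2$, so that $\nu_1^*(A\setminus\bigcup\Gg)=0$ and the chain of estimates runs in the opposite direction.
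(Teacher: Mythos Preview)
Your argument is essentially the paper's: reduce to bounded $A$, build a fine cover by $4$-proper balls satisfying the ratio inequality inside a superset $U$ of controlled $\nu_1$-measure, run a Besicovitch-driven Vitali exhaustion to extract a countable disjoint subfamily missing only a $\nu_2$-null portion of $A$, and then sum; the paper simply packages the exhaustion step as the separate Lemma~\ref{lem:outer} and takes $U$ closed rather than open. One minor slip worth flagging: the set $U\setminus\bigcup_{m<n}\bigcup\Gg_m$ is \emph{not} open (it is open minus open), so your assertion that $\Ff^{(n)}$ is still a fine cover of all of $A_n$ fails at points of $A_n$ lying on the boundary spheres of previously selected balls---this is the same technicality present in the paper's open-ball formulation of Lemma~\ref{lem:outer}, and the standard cure is to run the iteration with closed balls (as in the Evans--Gariepy/Bogachev originals) so that removing them leaves a genuinely open set.
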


Proposition \ref{prop:est1}  is an   extension  of \cite[Lemma 5.8.7, vol. 1]{Bogachev2007}  and   will be  proved in  a similar way based  on  Theorem  \ref{thm:besi} and  Lemma \ref{lem:outer} below.   We  shall say that  an open geodesic  ball  $D_{r}(x)\subset (M, g)$ is {\it $k$-proper}, if
$kr$ is  at most  the injectivity  radius  of $(M, g)$ at $x$.

\begin{lemma}\label{lem:outer}  Let $\mu$ be a locally finite Borel     
	measure  on  a  complete  manifold $(M, g)$. Suppose that  $\Ff$ is a collection  of  open $4$-proper geodesic balls in $(M, g)$  the  set  of
	centers of which is denoted by $A$,  and for every $a \in A$  and every $\eps >0$,  $\Ff$ contains  an open $4$-proper geodesic ball $D_r(a)$  with $ r< \eps$.  If $A$ is bounded then for every  nonempty  closed  set $U \subset M$ one can find  an at most countable  collection  of disjoint balls $D_j \in \Ff$  such that
	$$ \bigcup _{j=1}^\infty  D_j  \subset U  \text{  and  }  \mu^* ((A\cap U) \setminus \bigcup_{j=1}^\infty D_j ) = 0.$$
\end{lemma}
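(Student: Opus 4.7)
The plan is to adapt the classical Vitali-type iteration of Evans--Gariepy and Bogachev to the Riemannian setting, using Theorem \ref{thm:besi} as the black box in place of the Euclidean Besicovitch theorem. Write $N := L(A)$ for the multiplicity constant from Theorem \ref{thm:besi}. A preliminary observation: Hopf--Rinow gives that $\bar A$ is compact (since $(M,g)$ is complete and $A$ is bounded), hence $\mu(\bar A)<\infty$ by local finiteness and in particular $\mu^*(A\cap U)<\infty$.

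Set $U_0:=U$ and iterate. At stage $k\ge 0$ let $\Ff_k:=\{D\in\Ff:D\subset U_k\}$; the fine-cover hypothesis on $\Ff$ ensures that $\Ff_k$ is still a fine cover of $A\cap U_k^{\circ}$. Applying Theorem \ref{thm:besi} to $\Ff_k$ (whose set of centers is bounded, being a subset of $A$) yields $N$ at-most-countable subfamilies $\Ff_{k,1},\dots,\Ff_{k,N}$ of pairwise disjoint balls with $A\cap U_k^{\circ}\subset\bigcup_{i=1}^{N}\bigcup_{D\in\Ff_{k,i}}D$. Countable subadditivity of $\mu^*$ together with pigeonhole produce an index $i_0=i_0(k)$ satisfying
$$\mu^*\Bigl(A\cap U_k^{\circ}\cap\bigcup_{D\in\Ff_{k,i_0}}D\Bigr)\ \ge\ \frac{1}{N}\,\mu^*(A\cap U_k^{\circ}).$$
Truncate $\Ff_{k,i_0}$ to a finite subfamily $\Gg_k$ that still captures at least half of this outer measure, put $E_k:=\bigcup_{D\in\Gg_k}D$, and set $U_{k+1}:=U_k\setminus E_k$, again closed.

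Since $E_k$ is Borel, Carath\'eodory's criterion combined with the identity $U_{k+1}^{\circ}=U_k^{\circ}\setminus\bar E_k$ gives
$$\mu^*(A\cap U_{k+1}^{\circ})\ \le\ \mu^*(A\cap U_k^{\circ})-\mu^*\bigl(A\cap U_k^{\circ}\cap E_k\bigr)\ \le\ \Bigl(1-\tfrac{1}{2N}\Bigr)\mu^*(A\cap U_k^{\circ}),$$
so $\mu^*(A\cap U_k^{\circ})\to 0$ geometrically. The countable collection $\Gg:=\bigsqcup_k\Gg_k$ is pairwise disjoint (within each $\Gg_k$ by Theorem \ref{thm:besi}, and between stages since $D\in\Gg_\ell$ for $\ell>k$ satisfies $D\subset U_{k+1}\subset M\setminus E_k$) and lies entirely inside $U$, leaving residual of $\mu^*$-measure zero.

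The main obstacle is two-fold. First, the truncation step --- upgrading a $\mu^*$-lower bound on a countable disjoint subfamily to one on a finite subfamily --- hinges on the continuity $\mu^*(B\cap F_m)\uparrow\mu^*(B\cap F)$ for measurable $F_m\uparrow F$ of finite total $\mu$-mass, which follows from a standard measurable-hull argument together with the finiteness $\mu(E_k)<\infty$ (as $E_k$ sits in a neighborhood of $\bar A$). Second, the passage from $\mu^*(A\cap U_K^{\circ})\to 0$ to vanishing of the true residual requires that $\mu^*$ place no mass on $A\cap\partial U$ or on $A\cap\partial D$ for $D\in\Gg$; the latter is arranged by selecting, at each $a\in A$, radii $r$ with $\mu(\partial D_r(a))=0$, possible because at any fixed center only countably many radii can charge the corresponding geodesic sphere.
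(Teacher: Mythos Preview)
Your iteration is exactly the paper's: restrict $\Ff$ to balls contained in the current set, apply Theorem~\ref{thm:besi}, pigeonhole to a single disjoint subfamily carrying a $1/N$ fraction of the mass, truncate to finitely many balls, remove them, and repeat to obtain the geometric contraction \eqref{eq:eps1}. One point you pass over is why the \emph{same} constant $N=L(A)$ works at every stage; the paper addresses this explicitly as Claim~4, observing that the constants $c_1,c_2,b,\alpha,\delta,d$ produced in the proof of Theorem~\ref{thm:besi} depend only on the ambient bounded set $A$ and are therefore inherited by any subfamily whose centers lie in $A$.

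The obstruction you isolate at the end is genuine and is not resolved by your sphere-avoidance trick: that device kills $\mu$-mass on the boundaries of the \emph{chosen} balls but does nothing about $A\cap\partial U$, which is handed to you. The paper's proof glosses over the same point --- it writes $(A\cap U)\subset\bigcup_j\bigcup_{D\in\Ff^1_j}D$, whereas Theorem~\ref{thm:besi} only covers the set of centers of $\Ff^1$, namely $A\cap U^{\circ}$. In fact the lemma is false as stated for closed $U$: with $M=\R$, $\mu=\delta_0$, $A=\{0\}$, $U=(-\infty,0]$, no open ball centered at $0$ lies in $U$, so the only admissible collection is empty and the residual has mass $1$. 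The intended statement (cf.\ Bogachev, Corollary~5.8.2, and Evans--Gariepy) is for \emph{open} $U$: then every $a\in A\cap U$ is the center of some ball in $\Ff^1$, the cover is honest, one removes the \emph{closures} $\bar D_j$ so that each successive $U_k$ is again open, and both of your boundary worries disappear. With that emendation your argument and the paper's coincide and are complete.
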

\begin{proof}[Proof of Lemma \ref{lem:outer}] 
We prove   Lemma  \ref{lem:outer}  using  Theorem \ref{thm:besi}  and  the  Bishop comparison theorem as well as  arguments  in the  proof  of \cite[Corollary 5.8.2, p. 363]{Bogachev2007}.  Let $A$ , $\Ff$  and $U$  be  as in  Lemma \ref{lem:outer}.     By   Theorem \ref{thm:besi}   there exist  subcollections $\Ff_j$ such   that  $\Ff_j$  consists   of at most    countably many disjoint  balls 
	 and	
$$	 A \subset   \bigcup_{j =1}^{L(A)} \bigcup _{D \in \Ff_j} D.$$
Set
$$\Ff^1: =  \{  D \in  \Ff|\, D \subset  U\}.$$
Now we  shall  apply Theorem \ref{thm:besi}  to $A\cap U$  and $\Ff^1$.  Then  we have
$$(A \cap  U) \subset  \bigcup_{j =1} ^{ L(A\cap U)} \bigcup _{  D \in \Ff ^1_j} D.$$

{\it  Claim 4}. We  can choose  $L(A\cap  U) \le L(A)$.
\begin{proof}[Proof  of  Claim 4]
 Since $ A \cap U  \subset A$, 
     we can  choose the constant $c_1(A\cap U)$  (resp.  $c_2 (A\cap U)$,  $d (A \cap U)$, $\alpha (A \cap U) $,  $b (A \cap U)$)     equal  to  $c_1 (A) $ (resp.    $c_2 (A)$,   $ d(A) $, $\alpha(A)$,    $b(A)$)   such that    the  statements   in  the proof  of  Theorem \ref{thm:besi} holds  for  $A \cap U$  with        these (modified)  constants.    Since  $A\cap U \subset  A$,     we  have
 $r_{A\cap U}  \ge r_A$, hence  we  can  also  choose  $\delta (A\cap U): = \delta (A)$, and therefore  $d(A\cap  U):= d (A) $  such that    the  statements   in  the proof  of  Theorem \ref{thm:besi} holds  for  $A \cap U$  with        these (modified)  constants. This proves  Claim 4.  
\end{proof} 

It follows  that
$$\mu^*(A \cap U) \le \sum_{j=1}^{L(A)}\mu^*( (A\cap U) \cap (\bigcup _{D \in \Ff^1 _j} D).$$
Hence  there  exists $j \in \{ 1, \cdots, L(A)\}$ such that
$$\mu^* ( (A \cap U) \cap ( \bigcup_{D \in \Ff^1 _j}D)) \ge {1\over L(A)}  \mu^*  (A\cap U).$$
Therefore  there   exists  a finite    collection  $D_1, \cdots, D_{k_1} \in \Ff^1_j$    such  that
$$\mu^* ( (A \cap U)  \cap ( \bigcup_{i=1}^{k_1} D_i))  \ge{1\over  2L(A)}\mu^* (A\cap U).$$
It follows
\begin{equation}\label{eq:eps1}
\mu^* (  ( A \cap U) \setminus   \bigcup_{j =1} ^{k_1}D_j)\le  ( 1- {1\over 2L(A)})\mu^* (A\cap U).
\end{equation}

Now we set  $U_2 : =  U \setminus   \cup _{j= 1}^{k_1}D_j$. Set
$$ \Ff^2 : = \{ D\in \Ff  | \, D  \subset U_2\}.$$
 The  set
$U_2$ is closed, since  $D_j$ are open.  Hence   there   exists a finite collection  of disjoint  balls $D_{k_1+1}, \cdots, D_{k_2}$  from $\Ff^2$    and  by  (\ref{eq:eps1})  we have
$$\mu^* ( (A \cap U) \setminus \bigcup _{ j =1} ^{k_2} D_j ) = \mu^* ( ( A \cap U_2 )\setminus \bigcup_{j =k_1 +1}^{k_2} D_j) $$
$$\le   (1- {1\over 2L (A)})\mu^* (A \cap U_2 ) \le  (1- {1\over 2L(A)})^2 \mu^*(A \cap U).$$
Repeating   this process  we get    for  all $p \in  \N^+$
$$\mu^*( (A \cap U )\setminus  \bigcup_{j =1} ^{k_p} D_j ) \le  (1 - {1\over 2L(A)})^p  \mu^*(A\cap U).$$
Since  $\mu^* (A)< \infty $    this    proves Lemma \ref{lem:outer}.
	\end{proof}

\begin{proof}[Proof  of Proposition \ref{prop:est1}]
 By the property of outer measures  it suffices  to prove  Proposition  \ref{prop:est1}  for  bounded  sets $A$.    We shall derive 
Proposition  \ref{prop:est1} from  Lemma \ref{lem:outer}   as  in the proof of \cite[Lemma  5.8.7, p. 368,  vol 1]{Bogachev2007}.   
Assume that  $A  \subset \{ x : \, \underline D_{\nu_1} \nu_2  \le c\}$.  Let $\eps >0$ and
$U$ be   a closed set containing  $A$. Denote by $\Ff$ the class  of  all open 4-proper  geodesic  balls $D_r(a)\subset U$  with $ r >0$,   $a\in A$  and $\nu_2 (D_r (a))\le ( c+ \eps) \nu_1 (D_r(a))$.   By the definition   of $\underline D_{\nu_1}\nu_2 $ we   have $ \inf \{ r:  D_r (a)\in \Ff\} = 0$   for all $a \in A$.  By Lemma \ref{lem:outer}  there exists an  at most   countable family  of disjoint balls
$D_j \in \Ff$  with  $\nu_2^* (A \setminus \cup_{j=1}^{\infty}D_j) = 0$  and
$\cup_{j =1}^\infty D_j \subset U$.  Hence
$$\nu_2 ^* (A)\le \sum_{j =1}^\infty \nu_2  (D_j)\le (c +\eps)\sum_{j=1}^\infty \nu_1 (D_j)\le  (c+\eps)\nu_1 (U).$$
Since $U \supset A$ is arbitrary,  we  obtain the desired  estimate.  The second assertion (ii)  is proven similarly, one has  only to take   for $\Ff$  the class of balls  that satisfy  $\nu_2(D_r(a)) \ge (c-\eps)\nu_1 (D_r(a))$. This completes  the proof of Proposition \ref{prop:est1}.
	\end{proof} 

{\it Completion  of the proof of Theorem \ref{thm:rnr}}.   Proposition  \ref{prop:est1} implies  that $\nu_1^* (S) = 0$. 

Next let $  0 < a < b $  and   set
$$ S(a,b): \{  x : \underline{D}_{\nu_1} \nu_2(x) < a  < b  < \overline{D}_{\nu_1}\nu_2 (x) < +\infty\} .$$

Proposition  \ref{prop:est1}  implies  that 
$$b \, \nu_1^* ( S(a, b)) \le \nu_2^* (S(a,b))\le a\, \nu_1 ^* (S(a,b)).$$
Hence $\nu_1  ^* (S(a,b)) = 0$ because $a < b$.   The union  $S_1$
of $S(a,b)$ over all positive rational numbers  $a, b$   also has  zero $\nu_1^*$-measure.
Hence  there  exists  a measurable      subset  $S_0 \subset M$  of zero  $\nu_1$-measure such that
$S \cup S_1 \subset  S_0$. This proves   the 
first assertion  of Theorem \ref{thm:rnr}.

\

Now let us  show that $\tilde D_{\nu_1} \nu_2 (x)$ is    measurable. 
Clearly, it suffices  to show   that $D_{\nu_1} \nu_2 : M^n \setminus  S_0 \to  \R$  is  measurable.

\begin{lemma}\label{lem:meas}
For   each $  r>0$  the function   $f_r(x) : = \nu_1(D_r(x)): M^n \to \R$ is  lower-semi  continuous  and hence measurable.
\end{lemma}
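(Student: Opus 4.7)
The plan is to prove lower semi-continuity directly from the sequential definition, and then invoke the standard fact that every lower semi-continuous function is Borel measurable.

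First, I would fix $r>0$ and a point $x_0 \in M$, and write the open ball as an increasing union of closed (in fact compact) sets:
\begin{equation*}
D_r(x_0) = \bigcup_{k=1}^{\infty} K_k, \qquad K_k := \{ y \in M : \rho_g(y, x_0) \le r - 1/k\}.
\end{equation*}
Each $K_k$ is closed and bounded, hence compact by the Hopf--Rinow theorem (this is where completeness of $(M,g)$ is used). Since $\nu_1$ is locally finite and $K_k$ is compact, $\nu_1(K_k)<\infty$, and continuity of $\nu_1$ from below gives $\nu_1(K_k) \uparrow \nu_1(D_r(x_0))$.

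Next I would establish the key sequential estimate: for any sequence $x_n \to x_0$ and each fixed $k$, we have $K_k \subset D_r(x_n)$ for all sufficiently large $n$. Indeed, by the triangle inequality for $\rho_g$, any $y \in K_k$ satisfies
\begin{equation*}
\rho_g(y, x_n) \le \rho_g(y, x_0) + \rho_g(x_0, x_n) \le (r - 1/k) + \rho_g(x_0, x_n) < r
\end{equation*}
once $\rho_g(x_0, x_n) < 1/k$. Consequently $\nu_1(D_r(x_n)) \ge \nu_1(K_k)$ eventually, so $\liminf_{n\to\infty} f_r(x_n) \ge \nu_1(K_k)$. Letting $k\to\infty$ yields $\liminf_{n\to\infty} f_r(x_n) \ge \nu_1(D_r(x_0)) = f_r(x_0)$, which is precisely lower semi-continuity at $x_0$.

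Finally, since $f_r$ is lower semi-continuous, for each $c \in \R$ the superlevel set $\{x \in M : f_r(x) > c\}$ is open, hence Borel, so $f_r$ is Borel measurable. There is no real obstacle here; the only subtle point is invoking completeness of $(M,g)$ via Hopf--Rinow to guarantee that the closed sublevel balls $K_k$ are compact (and hence of finite $\nu_1$-measure), which is what allows the continuity-from-below step to be applied cleanly.
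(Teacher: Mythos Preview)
Your proof is correct and follows essentially the same argument as the paper, which approximates $D_r(x_0)$ from within by the smaller balls $D_{r-1/k}(x_0)$ and uses the triangle inequality in exactly the same way to get $\liminf_{y\to x_0}\nu_1(D_r(y))\ge \nu_1(D_r(x_0))$. The detour through Hopf--Rinow and compactness is unnecessary, however: continuity of a measure from below holds for \emph{any} increasing sequence of measurable sets, with no finiteness hypothesis, so you may drop the compactness discussion entirely.
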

\begin{proof}  Since $\lim_{k \to \infty} \nu_1 (D_{r-1/k} (x )) = \nu_1 (D_r(x) )$, taking into account that  $D_{r-1/k}(x)\subset D_r(y)$ if $| x- y|  < 1/k$, we obtain
$$\lim  _{ y \to x} \inf \nu_1  (D_r(y))  \ge  \nu_1 (D_r (x))$$
which we needed  to prove.
\end{proof}

Since $S_0$ is    measurable, we obtain  immediately   from   Lemma \ref{lem:meas}  the following

\begin{corollary}\label{cor:meas}
For each  $r>0$ the  restriction  $f_r|_{M \setminus  S_0}	$ is a measurable  function.
\end{corollary}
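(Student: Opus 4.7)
The plan is to verify the corollary as a direct consequence of Lemma \ref{lem:meas} together with the measurability of $S_0$ that was established in the ``Completion of the proof of Theorem \ref{thm:rnr}'' paragraph. I will equip $M \setminus S_0$ with the trace $\sigma$-algebra (Borel subsets of $M$ intersected with $M\setminus S_0$) and invoke the standard fact that the restriction of a measurable function to a measurable subset is automatically measurable with respect to the induced structure.

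The only computation needed is the following. For every Borel set $B \subset \R$,
\[
(f_r|_{M \setminus S_0})^{-1}(B) = f_r^{-1}(B) \cap (M \setminus S_0).
\]
By Lemma \ref{lem:meas}, the function $f_r$ is lower semi-continuous on $M$, so $f_r^{-1}(B)$ is a Borel subset of $M$. The set $M \setminus S_0$ is measurable because $S_0$ was constructed as a measurable $\nu_1$-null subset of $M$ in the argument preceding Lemma \ref{lem:meas}. Hence the intersection is measurable, which shows that $f_r|_{M\setminus S_0}$ is measurable.

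There is essentially no obstacle here: the corollary is a one-line consequence of two facts already in hand, namely the lower semi-continuity of $f_r$ and the measurability of the exceptional set $S_0$. The only point requiring any care is fixing the convention on the $\sigma$-algebra on $M \setminus S_0$; once the natural trace $\sigma$-algebra is used, no further argument is needed. I therefore expect the whole proof to amount to little more than the display above and a single sentence of justification.
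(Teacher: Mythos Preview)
Your proposal is correct and matches the paper's approach exactly: the paper simply states that the corollary follows immediately from Lemma~\ref{lem:meas} together with the measurability of $S_0$, and your argument is precisely the one-line justification of that immediacy via the trace $\sigma$-algebra.
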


  In the same way,    the  restriction of  function $f'_r (x): = \nu_2(D_r(x))$
  to $M \setminus  S_0$ is   measurable.
For  $k \in  \N^+$   and $x   \in M \setminus S_0$  we set
$$ \tau_k (x) : = \frac{\nu_2 (D_{1/k} (x))}{\nu_1(D_{1/k} (x))}.$$
It follows that  the  function $\tau_k: M\setminus S_0 \to \R$  is  measurable.
Hence the function $D_{\nu_1} \nu_2 (x):M \setminus S_0 \to \R$ is measurable, which we had  to  prove.

Finally  we  prove   that  $\tilde D_{\nu_1}\nu_2$   serves   as the Radon-Nikodym  derivative  of $\nu_2$ w.r.t.  $\nu_1$.
Equivalently we need to show  that
for   for any $A \in \Sigma_{M}$ we have
\begin{equation}\label{eq:rn1}
\nu_2 (A)= \int _A D_{\nu_1}\nu_2 d(\nu_1).
\end{equation}
Here  we  use  the argument in \cite[p. 368-369, vol.1]{Bogachev2007}.  Let $t >1$  and set   for  $ m \in \Z$
$$A_m : = A \cap \{ x\in (M\setminus S_0)|\,  t^m < D_{\nu_1}\nu_2 (x) <  t^{m+1}\}  .$$
The union $\cup_{m = -\infty}^\infty A_m$ covers $A$  up to $\nu_2$-measure zero set, since
$\nu_2$-a.e.   we have  $ D_{\nu_1}\nu_2  >0$.
Hence we have
$$\nu_2(A)= \sum_{m=-\infty}^\infty \nu_2(A_m)\le \sum_{m= -\infty}^\infty  t^{m+1} \nu_1 (A_m)$$
$$ \le    t  \sum_{m=-\infty}^\infty\int_{A_m} \tilde D_{\nu_1}\nu_2 d\nu_1 = t \int_A  D_{\nu_1}\nu_2d\nu_1 .$$
This is true   for any $ t>1$. Hence
\begin{equation}\label{eq:est1}
\nu_2  (A) \le \int_A  D_{\nu_1}\nu_2 d\nu_1.
\end{equation}
Using  $\nu_2(A_m) \ge t^m  \nu_1  (A_m)$ we obtain
\begin{equation}\label{eq:estf}
\nu_2 (A)\ge \int_A D_{\nu_1}\nu_2 d\nu_1.
\end{equation}
Clearly  (\ref{eq:rn1})  follows from (\ref{eq:est1})  and (\ref{eq:estf}).

   This completes
the proof  of  Theorem \ref{thm:rnr}.

\section*{Acknowledgement} The authors  would like to thank Juan Pablo Vigneaux  for helpful comments  on  an  earlier version  of this   note.

\end{document}